\newtheorem{lemma}{Lemma}
\newtheorem{theorem}[lemma]{Theorem}
\newtheorem{question}[lemma]{Question}
\title[]{On sparse perfect powers}
\subjclass[2020]{11D41, 11P99}
\keywords{base representation, sparse powers}
\author{A. Moscariello}
\address{Dipartimento di Matematica, Università di Pisa, Largo Bruno Pontecorvo 5, 56127 Pisa, Italy.}
\email{moscariello@mail.dm.unipi.it}
\begin{document}
\maketitle
\begin{abstract}
This work is devoted to proving that, given an integer $x \ge 2$, there are infinitely many perfect powers, coprime with $x$, having exactly $k \ge 3$ non-zero digits in their base $x$ representation, except for the case $x=2, k=4$, for which a known finiteness result by Corvaja and Zannier holds.
\end{abstract}
\section*{Introduction}

Let $k$ and $x$ be positive integers, with $x \ge 2$. In this work, we will study perfect powers having exactly $k$ non-zero digits in their representation in a given basis $x$. These perfect powers are exactly (up to dividing by a suitable factor) the set solutions of the Diophantine equation
\begin{equation}\label{maincase}
y^d=c_0+\sum_{i=1}^{k-1} c_ix^{m_i},
\end{equation} 
with $y,d$ positive integers greater than $1$, and $c_0,c_1,\dots,c_{k-1} \in \{1,\dots,x-1\}$ and $m_1 < \dots < m_{k-1}$ positive integers. 
We call perfect powers having a fixed number of non-zero digits \emph{sparse}, borrowing the terminology used for polynomials (a \emph{sparse} polynomial is a polynomial having \emph{relatively few} non-zero terms, compared to its degree)
Special cases of this innocent problem has been widely studied in the literature, and its appearence is quite deceiving: for instance, the lowest case, obtained with the positions $k=2$, $c_0=c_1=1$, is the well-known Catalan's conjecture, first proposed in 1844, which stood open for nearly 150 years before being proved by Mihailescu (cf. \cite{M1}) in the case $x=2$. Furthermore, the case $k=2$, $x > 2$ (i.e. perfect powers having exactly two digits in their base $x > 2$ representation) is still open (cf. \cite[\S 4.4.3]{N}), and is related to the well-known ABC conjecture. 

This class of problems also presents some ties to algebraic geometry. In fact,  Corvaja and Zannier showed in \cite{CZ2} that solutions of an equation of the form (\ref{maincase}) are associated with $S$-integral points on certain projective varieties. For instance, assume for the sake of simplicity that $x=p$ is a prime number, and that $k,d$ are fixed and $c_0=c_1=\dots=c_{k-1}=1$ in equation (\ref{maincase}). Consider, in the projective space $\mathbb{P}_k$, the variety $\mathbb{P}_k \setminus D$, where $D$ denotes the divisor consisting of the $k-1$ lines $X_i=0$, for $i=0,\dots,k-2$, and the hypersurface $X_{k-1}^d=X_0^d+\displaystyle \sum_{i=1}^{k-2} X_0^{d-1}X_i$, and let $S=\{\infty, p\}$. Then, $S$-integral points of this variety are such that the values $y_i=\frac{X_i}{X_0}$, where $i=1,\dots,k-2$, and $y_{k-1}=\left( \frac{X_{k-1}}{X_0} \right)^d - 1 - \displaystyle \sum_{i=1}^{k-2} \frac{X_i}{X_0}$ are all $S$-units. Also, the elements $y_i$ all have the form $\pm p^{m_i}$ and are such that $1+y_1+\dots+y_{k-1}$ is a $d$th perfect power, and are thus solutions of equation (\ref{maincase}). Now, the study of these points, and their distribution, can also be seen as a particular instance of a conjecture by Lang and Vojta (see \cite{HS}); in our context, this conjecture would imply that the set of $S$-integral points on $\mathbb{P}_k \setminus D$ is not Zariski dense.

Besides Mihailescu's Theorem, the more general case $k=2$ is still open; however, there is some evidence suggesting that there may be only a finite number of perfect powers having exactly two non-zero digits in any given base $x$. The case $k=3$ has been studied recently (cf. \cite{BBM1}, \cite{CZ4}); in particular, Corvaja and Zannier developed in  \cite{CZ4} an approach using $v$-adic convergence of analytic series at $S$-unit points to reduce this problem to the study of polynomial identities involving lacunary polynomial powers (i.e. polynomial powers $P(T)^d$ having a fixed number $k$ of terms). This method allowed them to provide a classification of perfect powers having exactly three non-zero digits. 

Specifically, for $x=2$ they obtained the following	characterization.

\begin{theorem}[\protect{\cite{CZ1}}]\label{3digits}
	For $d \ge 2$ integer, the perfect $d$th powers in $\mathbb{N}$ having at most three non-zero digits in the binary scale form the union of finitely many sets of the shape $\{q2^{md} \ | \ m \in \mathbb{N}\}$ and, if $d=2$, also the set $\{(2^a+2^b)^2 \ | \ a,b \in \mathbb{N}\}$.
\end{theorem}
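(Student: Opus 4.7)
The plan is to reduce to \emph{odd} $d$th powers with at most three nonzero binary digits, and then split on the number of digits. Since $v_2(y^d) = d \cdot v_2(y)$, any such $y^d$ has the form $y^d = 2^{md}(1 + \varepsilon_1 2^s + \varepsilon_2 2^t)$ for some $m \ge 0$, $\varepsilon_i \in \{0,1\}$ and $1 \le s < t$; dividing by $2^{md}$ gives an odd $d$th power $q$ with at most three nonzero digits, and each such $q$ produces one family $\{q \cdot 2^{md}\}$. The goal is then to show that there are only finitely many such $q$, except for the extra $d=2$ family.

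One-digit ($q = 1$) is trivial, and the two-digit equation $y^d = 1 + 2^s$ is handled by elementary factorization: for $d$ odd, $(y-1)(y^{d-1}+\cdots+1) = 2^s$ forces the second factor to be odd and equal to $1$, which is impossible; for $d$ even, $(y^{d/2}-1)(y^{d/2}+1) = 2^s$ forces two powers of $2$ differing by $2$, hence $y^{d/2} = 3$, giving only $q = 9$ with $d = 2$.

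For the three-digit case with $d = 2$, I would write $y^2 - 1 = 2^s(1 + 2^{t-s})$, use $\gcd(y-1, y+1) = 2$ to split the $2$-valuation of the left side as $\{1, s-1\}$, and carry out a case analysis on the odd parts $u, v$ of $(y \mp 1)$ divided by the appropriate powers of $2$. These must satisfy $uv = 1 + 2^{t-s}$ together with a linear relation of the form $2^{s-2} u - v = \pm 1$. The only admissible solutions are $v = 1$, forcing $y = 1 + 2^{b-a}$ for some $b > a$, i.e., the exceptional family $(2^a + 2^b)^2$.

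The main obstacle is the three-digit case with $d \ge 3$: showing that $y^d = 1 + 2^s + 2^t$ has only finitely many odd solutions. Here I would follow the $v$-adic strategy of \cite{CZ4} alluded to in the introduction. For $s$ large relative to $d$, the binomial series $(1 + 2^s + 2^t)^{1/d} = \sum_{n \ge 0} \binom{1/d}{n}(2^s + 2^t)^n$ converges $2$-adically, and matching its truncated expansion with the actual base-$2$ expansion of the integer $y$ forces, in the asymptotic regime $t - s \to \infty$, a polynomial identity of the form $P(X, Y)^d = 1 + X + Y$ in $\mathbb{Q}[X, Y]$. Since $1 + X + Y$ is squarefree, no such $P$ of positive degree exists for $d \ge 3$, ruling out all sufficiently large $(s, t)$; the remaining bounded region reduces to a finite check via Baker-type estimates for $2$-adic linear forms in logarithms. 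Turning the heuristic $v$-adic comparison into a rigorous polynomial identity is precisely where the lacunary-power analysis of \cite{CZ4} is indispensable, and it constitutes the hardest step of the proof.
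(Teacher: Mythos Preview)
The paper does not prove this theorem at all: it is quoted, with attribution, from Corvaja--Zannier \cite{CZ1}, and the surrounding text only summarizes what their method achieves. So there is no ``paper's own proof'' to compare against; your proposal is effectively a sketch of how one might reprove the cited result.

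That said, your sketch contains a genuine error in the three-digit case for $d=2$. You assert that the system
\[
uv = 1 + 2^{\,t-s}, \qquad 2^{s-2}u - v = \pm 1
\]
forces $v=1$ and hence $y = 1 + 2^{c}$. This is false. Take $y = 23$: then $y-1 = 2\cdot 11$ and $y+1 = 2^{3}\cdot 3$, so $s = 4$, $u = 11$, $v = 3$, and indeed $uv = 33 = 1 + 2^{5}$, $2^{2}\cdot 3 - 11 = 1$, giving $23^{2} = 1 + 2^{4} + 2^{9}$. Likewise $y = 7$ gives $7^{2} = 1 + 2^{4} + 2^{5}$, which has $v = 1$ in your notation but $y = 2^{3}-1$, not $1 + 2^{c}$. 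Thus your case analysis neither forces $v=1$ nor, when $v=1$, forces $y$ into the parametric family. The theorem allows finitely many sporadic odd $q$ outside the family $\{(2^{a}+2^{b})^{2}\}$, and these examples are among them; proving that the sporadic list is \emph{finite} requires a further argument (e.g.\ Szalay's 2002 analysis of $2^{a}+2^{b}+1=z^{2}$, or the method of \cite{CZ4}), which your sketch does not supply.

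For $d\ge 3$ you correctly identify that the substantive content lies in the Subspace/$p$-adic machinery of \cite{CZ4,CZ1}, and you explicitly defer the hard step there; that is consistent with how the present paper treats the result, namely as an external citation rather than something reproved here.
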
 
In the same work, the authors comment that their method can be used to obtain results equivalent to Theorem \ref{3digits} for any given base $x$. Actually, Theorem \ref{3digits} states that if $k=3$, $x=2$ there are only a finite number of \emph{exceptional} solutions, and the infinite family $y=(2^a+1)$, $d=2$, corresponding to the polynomial identity $(T+1)^2=T^2+2T+1$. 

Intuitively, one might expect that as the number of terms $k$ increases, the number of polynomial powers $P(T)^d$ having exactly $k$ terms increases as well. Moreover, since Corvaja and Zannier's method can be adjusted to study perfect powers with $k \ge 3$ non-zero digits, under certain assumption, we might infer that there is an increasing number of infinite families of solutions to equation (\ref{maincase}).

However, this is not necessarily the case. In fact, while studying the case $k=4$, Corvaja and Zannier obtained families of lacunary polynomial powers having exactly $4$ terms that are not related to solutions of the Diophantine equation $y^d=c_0+c_12^{m_1}+c_22^{m_2}+c_32^{m_3}$. Actually, they proved that this Diophantine equation has only finitely many solutions.
\begin{theorem}[\protect{\cite[Theorem 1.1]{CZ1}}]\label{4digits}
	There are only finitely many odd perfect powers in $\mathbb{N}$ having precisely four non-zero digits in their representation in the binary scale.
\end{theorem}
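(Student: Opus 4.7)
The plan is to adapt the $v$-adic analytic framework that Corvaja and Zannier use in the proof of Theorem \ref{3digits}, with $v=2$ and $k=4$. Write the equation as
\begin{equation*}
y^d = 1 + 2^{m_1} + 2^{m_2} + 2^{m_3}, \qquad y \text{ odd},\ d \ge 2,\ 0 < m_1 < m_2 < m_3,
\end{equation*}
and assume for contradiction that infinitely many tuples satisfy it. First I would reduce to the case of fixed $d$: for each fixed odd $y > 1$, Baker-type lower bounds (in the spirit of Stewart) on the binary digit sum of $y^d$ force that sum to tend to infinity with $d$, so only finitely many $d$ can yield a $4$-digit representation; the remaining range $y, d \to \infty$ forces $m_3 \sim d\log_2 y$ and can be controlled by a direct $2$-adic valuation argument on $y^d - 2^{m_3}$. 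After fixing $d$ and passing to a subsequence, I would further assume by pigeonhole that $m_1$ and each of the gaps $m_2 - m_1$, $m_3 - m_2$ tend to infinity, since a bounded gap merges two exponents additively and collapses the equation into one with at most three free exponents, handled by Theorem \ref{3digits} or by Mihailescu.

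Since $y$ is odd, for $m_1 > v_2(d)+1$ the unique $d$-th root of $1+s$ lying in $1+2\mathbb{Z}_2$, where $s := 2^{m_1}+2^{m_2}+2^{m_3}$, is given by the $2$-adically convergent binomial series
\begin{equation*}
y = (1+s)^{1/d} = \sum_{n\ge 0} \binom{1/d}{n}\, s^n.
\end{equation*}
Expanding $s^n$ multinomially, the truncation $Q_N(T_1,T_2,T_3) := \sum_{n<N}\binom{1/d}{n}(T_1+T_2+T_3)^n$ satisfies
\begin{equation*}
y \equiv Q_N(2^{m_1},2^{m_2},2^{m_3}) \pmod{2^{Nm_1 - C_{N,d}}}
\end{equation*}
for a constant $C_{N,d}$ absorbing the denominators of $\binom{1/d}{n}$. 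Since $y < 2^{(m_3+1)/d}$ and the right-hand side has denominator dividing $d^N$, the product formula in $\mathbb{Q}$ upgrades the congruence to an equality $y = Q_N(2^{m_1},2^{m_2},2^{m_3})$ of rationals as soon as $N \gg m_3/(dm_1) + 1$, which is achieved by taking $N$ large after passing to a further subsequence on which $m_3/m_1$ is bounded (the unbounded case being treated separately via the real-archimedean estimate $y = 2^{m_3/d}(1+o(1))$, forcing tight divisibility constraints).

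Raising this equality to the $d$-th power and subtracting the original equation yields $F_N(2^{m_1},2^{m_2},2^{m_3}) = 0$, where
\begin{equation*}
F_N(T_1,T_2,T_3) := Q_N(T_1,T_2,T_3)^d - 1 - T_1 - T_2 - T_3 \in \mathbb{Q}[T_1,T_2,T_3].
\end{equation*}
Since $d \ge 2$ and $1+T_1+T_2+T_3$ is irreducible of degree one, $F_N$ is not identically zero. Expanding $F_N(2^{m_1},2^{m_2},2^{m_3})$ as a $\mathbb{Q}$-linear combination of monomials $2^{am_1+bm_2+cm_3}$ produces a nontrivial $S$-unit equation over $\mathbb{Z}$ with $S = \{\infty, 2\}$, to which I would apply the Evertse--Schlickewei--Schmidt theorem, concluding that only finitely many $(m_1,m_2,m_3)$ can satisfy it unless a proper sub-sum of the $S$-units in question vanishes identically along the infinite family.

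The principal difficulty is this last, degenerate, scenario. A vanishing sub-sum would impose a multiplicative dependence among $2^{m_1}, 2^{m_2}, 2^{m_3}$ which, since the base is the single prime $2$, is equivalent to an integer linear relation $\alpha m_1 + \beta m_2 + \gamma m_3 = \delta$ valid along the infinite family. Substituting such a relation back into the original equation reduces the number of free exponents, returning either to a $3$-digit problem in base $2$ (controlled by Theorem \ref{3digits}, whose only infinite exceptional family $y = 2^a+1$, $d=2$ yields binary representations with fewer than four non-zero digits) or to a $2$-digit problem (controlled by Mihailescu's theorem). A careful enumeration of all such degenerate configurations, verifying that none gives rise to an infinite family of odd $4$-digit perfect powers, completes the contradiction. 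This enumeration is where the real work lies, and it is also the step that specifically isolates the exceptional nature of $x=2$, $k=4$: the rigidity of $4$-bit binary expansions leaves no room for the infinite constructions that, in all other cases treated in the remainder of this paper, can be extracted from the extra flexibility afforded by a larger base or an additional digit.
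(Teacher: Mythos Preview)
This theorem is not proved in the present paper: it is quoted from Corvaja and Zannier \cite[Theorem~1.1]{CZ1} and used as a known input to the main classification. There is therefore no proof here to compare your proposal against; the paper's contribution runs in the opposite (constructive) direction.

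That said, your outline does track the broad strategy of \cite{CZ1}: a $2$-adic binomial expansion of $(1+s)^{1/d}$, truncation to a polynomial relation, and an appeal to the Subspace Theorem through $S$-unit equations with $S=\{\infty,2\}$. The genuine gap is in your treatment of the degenerate (vanishing sub-sum) case. A linear dependence $\alpha m_1+\beta m_2+\gamma m_3=\delta$ among the exponents does \emph{not} reduce the original equation $y^d=1+2^{m_1}+2^{m_2}+2^{m_3}$ to a three-digit or two-digit problem: the right-hand side still has four binary digits, and constraining the exponents to an affine sublattice shrinks the parameter space without collapsing any of the summands. So your proposed recursion to Theorem~\ref{3digits} or to Mihailescu's theorem does not apply. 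What Corvaja and Zannier actually do at this stage is interpret each vanishing sub-sum as a \emph{polynomial} identity of the shape $P(T)^d=c_0+c_1T^{a_1}+c_2T^{a_2}+c_3T^{a_3}$ in one (or two) variables, classify these lacunary $d$th-power identities explicitly, and then check that none of them specialises with all nonzero coefficients equal to $1$ --- the only digit available in base $2$. That classification, not a reduction in $k$, is the combinatorial heart of their proof and precisely where the exceptionality of $(x,k)=(2,4)$ is established. Your last paragraph concedes that ``this enumeration is where the real work lies'', but the mechanism you suggest for carrying it out is not the right one.
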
 

In this work, we prove that these results are \emph{exceptional}. Namely, we show that it is possible to obtain infinite families of perfect powers (coprime with $x$) having exactly $k \ge 3$ non-zero digits in their base $x \ge 2$ representation (moreover, we will show that we can almost always provide infinite families of perfect squares) for all values of $x$ and $k$, except for the case $x=2$, $k=4$ studied by Corvaja and Zannier (Theorem \ref{4digits}). 

\section{Main result}
Consider the equation
\addtocounter{equation}{-1}
\begin{equation}\label{maincase}
y^d=c_0+\sum_{i=1}^{k-1} c_ix^{m_i}.
\end{equation} 

In this work we want to determine whether the Diophantine equation (\ref{maincase}) admits infinitely many solutions, for given values of $x$ and $k$. Arguing that some solutions can be induced from polynomial identities, and since intuitively, as the number of terms $k$ increase, we can guess that there are more and more polynomial powers $P(T)^d$ having exactly $k$ non-zero terms, our expectation is that, as $k$ increases, it is easier to find infinite families of perfect powers with exactly $k$ non-zero digits; our approach will focus on finding such families in some specific setting. Actually, we will see that finiteness results can only be obtained in the cases $k=2$ and $k=4,x=2$.

First, notice that the natural expansion of $(1+X_1+\dots+X_{p-1})^d \in \mathbb{C}[X_1,\dots,X_{p-1}]$ has exactly $\binom{p-1+d}{d}$ distinct terms. Therefore, we can choose a suitable specialization $X_i = x^{\alpha_i}$, with positive integers $\alpha_i$ such that different terms of the expansion yield different powers of $x$; under the assumption that $x$ is greater than all coefficients of this expansion, we can obtain a correspondence between the terms of this expansion and the digits of our desired perfect power, and thus obtain perfect powers whose base $x$ representation has exactly $\binom{p-1+d}{d}$ non-zero digits. Similarly, under the same assumptions, we can choose a set of exponents $\alpha_i$ such that there are exactly $\beta$ equalities among those terms, for relatively small values of $\beta$, thus
obtaining perfect powers having exactly $\binom{p+d}{d}-\beta$ non-zero digits in their base $x$ representation (where $\beta$ hopefully takes all values between $0$ and $\binom{p-1+d}{d}-\binom{p-2+d}{d-1}-1$). 

From this argument it is possible to obtain, for a fixed value of $d$, families of infinite perfect powers  having exactly $\binom{p+d}{d}-\beta$ non-zero digits in their base $x$ representation; such a construction can be done with some work (with some modifications on the arguments we will use in the next parts of this paper), remembering that $x$ has to be larger than any coefficient appearing in the expansion  $(1+X_1+\dots+X_{p-1})^d \in \mathbb{C}[X_1,\dots,X_{p-1}]$ and making sure to find suitable constructions for all values of $\beta \in [0, \ldots, \binom{p+d}{d} - \binom{p+d-1}{d} ]$.

This simple idea naturally directs us to the best case: the integers $\binom{i}{2}$ form a sequence of relatively small intervals partitioning $\mathbb{N}$, and the coefficients of the expansion of $(1+X_1+\dots+X_{p-1})^2$ are all either $1$ or $2$. For $p \ge 1$ and $0=\alpha_0 < \alpha_1 < \dots < \alpha_{p-1}$ 
we can expand $(1+X_1+\dots+X_{p-1})^2$ in the following way:
\begin{equation}\label{sqbin}
\begin{gathered}	
(x^{\alpha_0}+x^{\alpha_1}+\dots+x^{\alpha_{p-1}})^2=x^{2\alpha_0}+(2x^{\alpha_0+\alpha_1})+x^{2\alpha_1}+\left(2x^{\alpha_2+\alpha_1} + 2x^{\alpha_2+\alpha_0}\right)+x^{2\alpha_2} \tag{*} \\ +\dots+x^{2\alpha_{p-3}}+\left( \sum_{i=0}^{p-3} 2x^{\alpha_{p-2}+\alpha_i} \right)+x^{2\alpha_{p-2}}+\left(\sum_{i=0}^{p-2}2x^{\alpha_{p-1}+\alpha_i}\right)+x^{2\alpha_{p-1}}. 
\end{gathered}
\end{equation}
Clearly $x$ is always not less than all the coefficients, and if $x > 2$, this expression can be used as a starting point to yield a representation.
However, if $x=2$, this expression needs to be slightly adjusted to become a binary representation, and for this motive we might have to slightly alter our construction; thus we will discuss the case $x=2$ separately from the rest.

\subsection{Perfect powers with arbitrary number of binary digits}
Clearly, the only admissible digits in the binary scale are $0$ and $1$, thus, in base $2$, equation (\ref{maincase}) becomes $$y^d=1+2^{\alpha_1}+\dots+2^{\alpha_{k-1}}.$$

The case $k \le 4$ has been widely studied in the literature. A well-known Theorem by Mihailescu states that there is only one odd perfect power having exactly two non-zero digits, that is, $3^2=1+2^3$. Recently, Szalay (see \cite{S}) completely solved the equation $y^2 = 2^a+2^b+1$. Further, the equation $y^n = 2^a + 2^b+1$, with $n \ge 2$ has been completely solved by Bennett et al. in \cite{BBM}, thus completing the study of perfect powers having exactly $3$ non-zero binary digits. In this context, it is worth noticing that the expansion $(1+2^{\alpha_1})^2$ (which is a trivial case of our argument) yields an infinite family of perfect squares with this property - see also Theorem \ref{3digits}.

In the same work \cite{BBM}, the authors also solved completely the equation $y^n = 2^a + 2^b + 2^c + 1$ for $n \ge 5$, dealing with perfect powers having $4$ non-zero binary digits. In this context, Theorem \ref{4digits} states that there are only finitely many such perfect powers not divisible by $2$.

In this work, we will then focus on the remaining cases, assuming $k \ge 5$. Clearly, Equation (\ref{sqbin}) can be adjusted to obtain the following binary representation (remember that $\alpha_0=0$):
\begin{equation}\label{sqbin2}
\begin{gathered}	
(2^{\alpha_0}+2^{\alpha_1}+\dots+2^{\alpha_{p-1}})^2=2^{2\alpha_0}+(2^{\alpha_0+\alpha_1+1})+2^{2\alpha_1}+\left(2^{\alpha_2+\alpha_1+1} + 2^{\alpha_2+\alpha_0+1}\right)+2^{2\alpha_2} \tag{$\star$} \\ +\dots+2^{2\alpha_{p-3}}+\left( \sum_{i=0}^{p-3} 2^{\alpha_{p-2}+\alpha_i+1} \right)+2^{2\alpha_{p-2}}+\left(\sum_{i=0}^{p-2}2^{\alpha_p+\alpha_i+1}\right)+2^{2\alpha_{p-1}}. 
\end{gathered}
\end{equation}

We rearranged the expression in this way since, for $i=1,\dots,p-1$ the $i$th bracket contains pairwise distinct terms, ranging between $2^{\alpha_i+\alpha_0+1}=2^{\alpha_i+1}$ and $2^{\alpha_i+\alpha_{i-1}+1}$. Thus if $\alpha_i \ge \alpha_{i-1}+2$ every term of the $i$th bracket is strictly lower than $2^{2\alpha_i}$, while if $\alpha_i \ge 2\alpha_{i-1}-1$ then all terms of that bracket are larger than $2^{2\alpha_{i-1}}$, with equality happening if and only if $2^{\alpha_i+\alpha_0+1}=2^{2\alpha_{i-1}}$, that is, if and only if $\alpha_i = 2\alpha_{i-1}-1.$ Hence, if $\alpha_i \ge 2\alpha_{i-1}-1$, equation (\ref{sqbin2}) yields a perfect square having $\binom{p+1}{2}$ terms, with at most $p-2$ coincident terms, given by the number of indexes such that $\alpha_i = 2\alpha_{i-1}-1.$

Therefore, we can easily prove the following.
\begin{lemma}\label{2not}
	Let $k$ be a positive integer greater than $4$ not of the form $\binom{p}{2}+1$, for a positive integer $p$. Then there exist infinitely many odd perfect squares having exactly $k$ non-zero digits in their representation in the binary scale.
\end{lemma}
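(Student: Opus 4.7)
The plan is to apply the expansion (\ref{sqbin2}) directly, choosing the exponents $\alpha_i$ so that exactly $\beta := \binom{p+1}{2}-k$ of the admissible collisions occur, for an appropriately chosen $p$. The starting observation is that the intervals $[\binom{p}{2}+2,\binom{p+1}{2}]$ for $p\ge 3$ partition the set of integers $k\ge 5$ that are not of the form $\binom{q}{2}+1$. So, given $k$ as in the statement, I let $p\ge 3$ be the unique integer with $\binom{p}{2}+2\le k\le \binom{p+1}{2}$ and set $\beta := \binom{p+1}{2}-k\in\{0,1,\dots,p-2\}$.

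Next, I fix any subset $I\subset\{2,\dots,p-1\}$ of cardinality $\beta$ and an integer $N\ge 2$. I define $\alpha_0=0$, $\alpha_1=N$, and recursively for $i=2,\dots,p-1$,
\[
\alpha_i = \begin{cases} 2\alpha_{i-1}-1 & \text{if } i\in I,\\ 2\alpha_{i-1} & \text{if } i\notin I.\end{cases}
\]
In particular $\alpha_i\ge 2\alpha_{i-1}-1$ for every $i\ge 2$, so the analysis preceding the lemma applies to the expansion (\ref{sqbin2}) of $(1+2^{\alpha_1}+\dots+2^{\alpha_{p-1}})^2$: it is a sum of $\binom{p+1}{2}$ powers of $2$ in which the unique collision of a bracket term with a neighbouring square (namely $2^{\alpha_i+1}=2^{2\alpha_{i-1}}$) occurs if and only if $i\in I$.

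The delicate point, which I expect to be the main obstacle, is to verify that each such pair $2^{2\alpha_{i-1}}+2^{2\alpha_{i-1}}=2^{2\alpha_{i-1}+1}$ genuinely reduces the digit count by exactly one; equivalently, that the carry position $2\alpha_{i-1}+1=\alpha_i+2$ does not coincide with any other term of (\ref{sqbin2}) or with a carry produced by another collision. A direct inspection handles this: (i) no other term of the $i$th bracket lies at position $\alpha_i+2$, since that would force $\alpha_j=1$ for some $j\ge 1$, contradicting $\alpha_1=N\ge 2$; (ii) the square $2^{2\alpha_i}$ is strictly higher because $\alpha_i\ge 3$; (iii) the smallest term $2^{\alpha_{i+1}+1}$ of the $(i+1)$st bracket sits above position $\alpha_i+2$, since $\alpha_{i+1}\ge 2\alpha_i-1\ge \alpha_i+2$; and (iv) carries from distinct collisions land at distinct positions of the form $\alpha_j+2$. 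All of these rely only on $\alpha_{i-1}\ge 2$, which is ensured inductively by $\alpha_1\ge 2$.

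Granted this, $(1+2^{\alpha_1}+\dots+2^{\alpha_{p-1}})^2$ has exactly $\binom{p+1}{2}-\beta=k$ non-zero binary digits, and it is odd because $1+2^{\alpha_1}+\dots+2^{\alpha_{p-1}}$ is odd. Finally, letting $N=\alpha_1$ range over integers $\ge 2$ produces infinitely many pairwise distinct such squares, proving the lemma.
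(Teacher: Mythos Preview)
Your argument is correct and follows essentially the same route as the paper: write $k=\binom{p+1}{2}-\beta$ with $0\le\beta\le p-2$, choose the $\alpha_i$ growing roughly like $2\alpha_{i-1}$, and force exactly $\beta$ of the coincidences $\alpha_i=2\alpha_{i-1}-1$ so that the expansion~(\ref{sqbin2}) has precisely $\beta$ collisions. The only cosmetic differences are that the paper fixes the collision indices to be the initial block $\{2,\dots,\beta+1\}$ and takes $\alpha_1\ge 3$, whereas you allow an arbitrary $\beta$-subset $I\subset\{2,\dots,p-1\}$ and $\alpha_1\ge 2$; your verification that each carry $2^{\alpha_i+2}$ lands in a fresh position (your items (i)--(iv)) is in fact more explicit than the paper's one-line check $2^{\alpha_i+2}<2^{\alpha_i+\alpha_1+1}$.
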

\begin{proof}
	Write $k$ as $k=\binom{p+1}{2}-\beta$, with $\beta \in \{0,\dots,p-2 \}$. Define a sequence $(\alpha_1,\dots,\alpha_{p-1})$ of positive integers such that
	$$\begin{cases} \alpha_1 \ge 3, \\ \alpha_i=2\alpha_{i-1}-1 \text{  for  } i=2,\dots,\beta+1, \\ \alpha_i >  2\alpha_{i-1}-1 \text{  for  } i > \beta+2.\\  \end{cases}$$
	Then, arguing as in the previous paragraphs, we can show that there are exactly $\beta$ coincident terms in the expansion (\ref{sqbin2}); moreover, those coincident terms are of the form $2^{2\alpha_{i-1}}$ and $2^{\alpha_i+\alpha_0+1}$, which then form the term $2^{2\alpha_{i-1}}+2^{\alpha_i+\alpha_0+1}=2^{\alpha_i+\alpha_0+2} < 2^{\alpha_i+\alpha_1+1}$ (since $\alpha_1 \ge 3$): thus the positive integer $y=(1+2^{\alpha_1}+\dots+2^{\alpha_{p-1}})$ is such that $y^2$ has exactly $\binom{p+1}{2}-\beta=k$ non-zero digits in its representation in the binary scale.
\end{proof}
Notice that if $k=\binom{p}{2}+1$ (i.e. $\beta=p-1$) this method would not work. Thus we have to prove this case in a slightly different way.
\begin{lemma}
	Let $k$ be a positive integer greater than $4$ of the form $\binom{p}{2}+1$, with $p$ a positive integer. Then there are infinitely many odd perfect squares having exactly $k$ non-zero digits in their binary representation.
\end{lemma}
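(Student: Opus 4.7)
The plan is to give two explicit infinite families, one for each parity of $\binom{p}{2}$, together covering all $p \ge 4$. Lemma \ref{2not} stops one short of $k = \binom{p}{2}+1$ because its recursive scheme accommodates only $\beta \le p-2$ coincident terms, so I switch to integers of the form ``a long block of consecutive binary ones plus a single far-away power of two'', whose square is cleanly analysable.

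If $p \equiv 0, 1 \pmod{4}$ (so $\binom{p}{2}$ is even), set $n = \binom{p}{2}/2$ and $y = (2^n - 1) + 2^M$ for $M$ a large positive integer. Expanding $y^2 = (2^n - 1)^2 + 2^{M+1}(2^n - 1) + 2^{2M}$ and rewriting each summand in binary produces three blocks: $(2^n - 1)^2 = 1 + \sum_{i=n+1}^{2n-1} 2^i$ (which has $n$ non-zero digits), $2^{M+1}(2^n - 1) = \sum_{i=M+1}^{M+n} 2^i$ (another $n$ digits), and $2^{2M}$ (one more). For $M \ge 2n$ these blocks occupy pairwise disjoint bit positions, so $y^2$ has exactly $2n + 1 = k$ non-zero binary digits.

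If instead $p \equiv 2, 3 \pmod{4}$ (so $\binom{p}{2}$ is odd and $p \ge 6$, whence $n := (\binom{p}{2}-1)/2 \ge 3$), take $y = (3 \cdot 2^n - 1) + 2^M$. Here the key computation is the binary identity
\[
(3 \cdot 2^n - 1)^2 \;=\; 9 \cdot 2^{2n} - 6 \cdot 2^n + 1 \;=\; 2^{2n+3} + 2^{n+1} + \sum_{i=n+3}^{2n-1} 2^i + 1,
\]
which requires resolving a single borrow in $2^{2n} - 2^{n+2} - 2^{n+1}$ and gives $n$ non-zero digits. The cross term rewrites cleanly as $2(3 \cdot 2^n - 1) \cdot 2^M = 2^{M+n+2} + \sum_{i=M+1}^{M+n} 2^i$, contributing $n + 1$ digits, and $2^{2M}$ contributes one more; for $M$ large enough the three blocks are disjoint, so $y^2$ has $2n + 2 = k$ non-zero binary digits. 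In both constructions $y$ is odd (the first summand is odd while $2^M$ is even), so $y^2$ is an odd perfect square, and varying $M$ yields infinitely many examples. The principal obstacle is the binary arithmetic identity for $(3 \cdot 2^n - 1)^2$; once it is verified, everything reduces to a routine disjointness check.
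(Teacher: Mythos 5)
Your proof is correct, but it takes a genuinely different route from the paper. The paper stays inside the framework of the other lemmas: it squares a $p$-term sum $1+2^{\alpha_1}+\dots+2^{\alpha_{p-1}}$, handles $k=7$ by the explicit example $(1+2^{\alpha_1}+2^{\alpha_1+1}+2^{\alpha_1+2})^2$, and for $k\ge 11$ prescribes $\alpha_2=\alpha_1+1$, $\alpha_3=\alpha_1+2$, $\alpha_4=2\alpha_1+4$ and $\alpha_i=2\alpha_{i-1}-1$ thereafter, so as to force exactly $p-1$ coincidences among the $\binom{p+1}{2}$ terms of the expansion $(\star)$ — one more coincidence than the scheme of Lemma \ref{2not} allows. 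You instead abandon the $p$-term sum altogether and use two-term integers $y=(2^n-1)+2^M$ and $y=(3\cdot 2^n-1)+2^M$, where the squared ``block of ones'' contributes a controlled number of digits ($n$ in each case, by the identities $(2^n-1)^2=2^{2n}-2^{n+1}+1$ and $(3\cdot 2^n-1)^2=2^{2n+3}+2^{n+1}+\sum_{i=n+3}^{2n-1}2^i+1$, the latter valid for $n\ge 3$, which your hypothesis $p\ge 6$ guarantees), the cross term contributes $n$ resp.\ $n+1$, and $M$ supplies both the disjointness and the infinitude. I checked your binary identities and the digit counts; they are right, the parity split covers all $p\ge 4$, and the two excluded residues $p\equiv 2,3\pmod 4$ with $p<6$ correspond to $k\in\{2,4\}$, which are outside the hypothesis. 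Your construction is more elementary and in fact proves more than the lemma asks: since $2n+1$ and $2n+2$ range over all sufficiently large integers, the special form $k=\binom{p}{2}+1$ plays no role, and the same two families also yield Lemma \ref{2not} for most $k$. What the paper's approach buys in exchange is uniformity: the same coincidence-counting template carries over verbatim to the base $x\ge 3$ lemmas later in the section, whereas your block-of-ones trick is specific to base $2$.
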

\begin{proof}
	Notice that the binary representation of $(1+2^{\alpha_1}+2^{\alpha_1+1}+2^{\alpha_1+2})^2$ is given by
	$$(1+2^{\alpha_1}+2^{\alpha_1+1}+2^{\alpha_1+2})^2=1+2^{\alpha_1+1}+2^{\alpha_1+2}+2^{\alpha_1+3}+2^{2\alpha_1}+2^{2\alpha_1+4}+2^{2\alpha_1+5},$$
	hence it has exactly $7=\binom{4}{2}+1$ non-zero digits; while, if $k \ge 11$ define as before an infinite sequence $(\alpha_1,\dots,\alpha_{p-1})$  of positive integers such that $$\begin{cases} \alpha_1 \ge 4, \\ \alpha_i=\alpha_1+i-1 \text{  for  } i=2,3, \\ \alpha_4 =  2\alpha_1+4 \text{       } , \\ \alpha_i=2\alpha_{i-1}-1 \text{  for  } i > 4.  \end{cases}.$$
	Let $y=1+2^{\alpha_1}+2^{\alpha_2}+\dots+2^{\alpha_{p-1}}.$ Then the expansion (\ref{sqbin2}) of $y^2$ has $\binom{p+1}{2}$ terms; let us count how many equalities there are between those terms:
	\begin{itemize}
		\item There are $3$ equalities depending on $\alpha_1,\alpha_2,\alpha_3$ only, which we deduce from the binary representation of $(1+2^{\alpha_1}+2^{\alpha_2}+2^{\alpha_3})^2$ (which has $\binom{5}{2}-3=7$ non-zero digits);
		\item There are $p-4$ equalities, one for each of the  $\alpha_i$, with $i > 4$; these $\alpha_i$ are chosen so that every term of the form $2^{2\alpha_i}$ is equal to the maximum term preceding it in the expansion (\ref{sqbin2}).
	\end{itemize}
	Therefore there are exactly $p-1$ equalities, and since each of the terms obtained by adding these coincident terms is distinct from any other term of the expansion since $\alpha_1 \ge 4$, we deduce that $y^2$ has exactly $\binom{p+1}{2}-(p-1)=\binom{p}{2}+1=k$ non-zero digits in its representation in the binary scale.
\end{proof}

Combining the last two results, we obtain the following result.
\begin{theorem}
	Let $k \ge 2$ be an integer. 
	\begin{enumerate}
		\item If $k \in \{2,4\}$, then there are only finitely many odd perfect powers in $\mathbb{N}$ having precisely $k$ non-zero digits in their representation in the binary scale.
		\item If $k \not \in \{2,4\}$, then there are infinitely many odd perfect squares in $\mathbb{N}$ having precisely $k$ non-zero digits in their representation in the binary scale.
	\end{enumerate}
\end{theorem}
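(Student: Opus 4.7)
The plan is essentially to assemble the theorem from the results already established (or cited) in the previous parts of the section; no new technical work is needed, only bookkeeping over the possible values of $k$. I would split the argument into two parts, according to the two items of the statement, and treat the finiteness assertions and the infinitude assertions separately.

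For part (1), the case $k=2$ is exactly the statement of Mihailescu's Theorem (the only odd perfect power with two non-zero binary digits is $9 = 1 + 2^3$), so this is immediate. The case $k=4$ is precisely Theorem \ref{4digits}, which I would simply quote.

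For part (2), I would handle the case $k=3$ first, in a single line, by observing that $(1+2^{\alpha_1})^2 = 1 + 2^{\alpha_1+1} + 2^{2\alpha_1}$ has exactly three non-zero binary digits whenever $\alpha_1 \ge 2$, since then $\alpha_1+1 < 2\alpha_1$ and the three exponents are pairwise distinct, producing an infinite family of odd squares. For $k\ge 5$, I would split on whether $k$ has the exceptional form $\binom{p}{2}+1$ for some positive integer $p$: if not, Lemma \ref{2not} already provides the desired infinite family of odd perfect squares; if so, the subsequent lemma does the same. Both lemmas produce an infinite family parameterized by the free choice of $\alpha_1$ (subject only to $\alpha_1 \ge 3$ or $\alpha_1 \ge 4$), and the resulting squares are visibly odd since $\alpha_0 = 0$ ensures the least significant bit is $1$.

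Since all the substantive content has been done in the preceding lemmas and the cited theorems, there is really no main obstacle in this final statement. The only conceptual point to verify is that the constructions in the two lemmas really do yield infinitely many \emph{distinct} integers (which is clear because varying $\alpha_1$ yields integers of different bit-lengths) and that they are \emph{odd} (which follows from $\alpha_0 = 0$ always being present as the $2^0$ term). Thus the proof reduces to a clean case split over $k \in \{2\}$, $k=3$, $k=4$, $k\ge 5$ with $k \neq \binom{p}{2}+1$, and $k\ge 5$ with $k = \binom{p}{2}+1$, invoking in each case Mihailescu's Theorem, the direct construction, Theorem \ref{4digits}, Lemma \ref{2not}, and the final lemma, respectively.
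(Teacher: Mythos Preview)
Your proposal is correct and matches the paper's approach exactly: the paper simply states the theorem as an immediate consequence of the preceding discussion (the summary of Mihailescu's Theorem, the $(1+2^{\alpha_1})^2$ family for $k=3$, Theorem~\ref{4digits}, and the two lemmas for $k\ge 5$), writing only ``Combining the last two results, we obtain the following result.'' Your explicit case split and the small sanity checks (oddness via $\alpha_0=0$, distinctness via varying $\alpha_1$) are a faithful and slightly more detailed rendering of the same assembly.
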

\subsection{Perfect powers with arbitrary number of base $x\ge 3$ digits}
Let $x \ge 3$. Determining whether the Diophantine equation $y^d=c_1x^{m_1}+c_2$ admits finitely or infinitely many solution is a very challenging open problem, studied by several authors (see for instance \cite[\S 4.4.3]{N} for results concerning this class of Diophantine equations); however, it is known that, for fixed $x \ge 2$, this equation has at most finitely many solutions in integers $0 \le c_1,c_2 < x$, $y$ coprime to $x$ and $d \ge 2$, and thus, given a fixed scale $x \ge 3$, there are at most finitely many perfect powers having exactly $k=2$ non-zero digits in their base $x$ representation.

The case $k=3$ has been studied by Bennet and Scheerer (see \cite{BS}) for certain values of $x$ (namely $x \in \{3,4,5,8,16\}$). For our purposes, it suffices to consider the expansion $(x^a+1)^2=x^{2a}+2x^a+1$ to conclude that there are infinitely many perfect squares not divisible by $x$ which base $x$ representation has exactly three non-zero digits.

Similarly, it is easy to see that the perfect cube $(x^{a}+1)^3=x^{3a}+3x^{2a}+3x^{a}+1$ has exactly four non-zero digits in its base $x$ representation; thus implying that there are infinitely many perfect cubes having exactly four non-zero digits in their base $x$ representation.

However, the examples used in the two cases $k=3,4$ cannot be used in the general case; in fact, for larger values of $d$, the coefficients of the expansion $(x^{a}+1)^d$ become very large, and since we need that $x$ is larger than all of these coefficients, for increasingly many values of $x$ this construction would not yield a base $x$ representation (as each coefficient could be associated with more than one digit). 

We approach this case similarly to the case $x=2$. Consider the expansion (fix $\alpha_0=0$) 
\begin{equation}\label{sqbinx}
\begin{gathered}	
(x^{\alpha_0}+x^{\alpha_1}+\dots+x^{\alpha_{p-1}})^2=x^{2\alpha_0}+(2x^{\alpha_0+\alpha_1})+x^{2\alpha_1}+\left(2x^{\alpha_2+\alpha_1} + 2x^{\alpha_2+\alpha_0}\right)+x^{2\alpha_2} \tag{*} \\ +\dots+x^{2\alpha_{p-3}}+\left( \sum_{i=0}^{p-3} 2x^{\alpha_{p-2}+\alpha_i} \right)+x^{2\alpha_{p-2}}+\left(\sum_{i=0}^{p-2}2x^{\alpha_{p-1}+\alpha_i}\right)+x^{2\alpha_{p-1}}. 
\end{gathered}
\end{equation}

As before, for $i=1,\dots,p-1$ the $i$th bracket contains pairwise distinct terms, ranging between $x^{\alpha_i+\alpha_0}=x^{\alpha_i}$ and  $x^{\alpha_i+\alpha_{i-1}}$. Thus if $\alpha_i \ge \alpha_{i-1}+1$ all these terms are strictly lower than  $x^{2\alpha_i}$, while if $\alpha_i \ge 2\alpha_{i-1}$ we have $\alpha_i + \alpha_{i-1} > \ldots > \alpha_i + \alpha_0 = \alpha_i \ge 2\alpha_{i-1}$, hence all the terms are strictly larger than $x^{2\alpha_{i-1}}$, with equality happening if and only if $\alpha_i=2\alpha_{i-1}$, which would imply $x^{\alpha_i+\alpha_0+1}=x^{2\alpha_{i-1}}$. Hence, if $\alpha_i \ge 2\alpha_{i-1}$, the equation (\ref{sqbinx}) gives a perfect square having exactly $\binom{p+1}{2}$ terms, and, just like we did in the case $x=2$, we can fiddle with our exponents in order to obtain the desired number of equalities (between $0$ and $p-2$). Therefore, the following result is very straightforward.

\begin{lemma}
	Let $k$ be a positive integer greater than four not of the form $\binom{p}{2}+1$, with $p$ positive integer, and let $x \ge 3$ be an integer. Then there exist infinitely many perfect squares, not divisible by $x$, having exactly $k$ non-zero digits in their base $x$ representation.
\end{lemma}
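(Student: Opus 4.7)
The plan is to mirror the proof of Lemma \ref{2not}, working from the expansion (\ref{sqbinx}) in place of (\ref{sqbin2}) and with the threshold $\alpha_i \ge 2\alpha_{i-1}$ in place of $\alpha_i \ge 2\alpha_{i-1}-1$, which is the analogous condition in the base-$2$ analysis. Since $k > 4$ and $k$ is not of the form $\binom{p}{2}+1$, write $k = \binom{p+1}{2} - \beta$ with $\beta \in \{0, 1, \dots, p-2\}$, and construct a sequence $(\alpha_1, \dots, \alpha_{p-1})$ by letting $\alpha_1$ be a free positive integer parameter, setting $\alpha_i = 2\alpha_{i-1}$ for $2 \le i \le \beta+1$ (forcing exactly $\beta$ coincidences in (\ref{sqbinx})), and $\alpha_i > 2\alpha_{i-1}$ for $i \ge \beta+2$ (preventing further coincidences). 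By the analysis preceding the lemma, each of these $\beta$ coincidences merges the monomial $x^{2\alpha_{i-1}}$ (with coefficient $1$) with $2x^{\alpha_i + \alpha_0} = 2x^{\alpha_i}$ (with coefficient $2$), producing a single monomial $3x^{2\alpha_{i-1}}$.

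For $x \ge 4$, the digit $3$ is a valid base-$x$ digit, so the representation of $y^2 = (1 + x^{\alpha_1} + \cdots + x^{\alpha_{p-1}})^2$ has exactly $\binom{p+1}{2} - \beta = k$ non-zero digits, once one verifies that no spurious coincidence occurs among the other exponents $\alpha_j + \alpha_l$; this is a routine check that follows from the strict growth conditions on the $\alpha_i$ together with the freedom to enlarge $\alpha_1$.

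The main subtlety arises in the boundary case $x = 3$, where $3 \cdot 3^{2\alpha_{i-1}} = 3^{2\alpha_{i-1}+1}$ and each merger triggers a carry to the position $2\alpha_{i-1}+1$. I would control this by taking $\alpha_1$ to be a large multiple of an auxiliary integer $N \ge 2$, and choosing the remaining $\alpha_i$ (including those with $i > \beta+1$) as multiples of $N$ as well. Then every exponent $\alpha_j + \alpha_l$ in the unsimplified expansion (\ref{sqbinx}) is a multiple of $N$, while each carry position satisfies $2\alpha_{i-1}+1 \equiv 1 \pmod{N}$. Hence every carry lands on a previously empty position, produces the digit $1$ without triggering further cascades, and the overall count is preserved: the digit at $2\alpha_{i-1}$ drops from $3$ to $0$ while a new digit $1$ appears at $2\alpha_{i-1}+1$, for a net change of zero. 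The expansion thus still has $\binom{p+1}{2} - \beta = k$ non-zero digits in its base-$3$ representation.

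Finally, $y \equiv 1 \pmod{x}$ yields $\gcd(y^2, x) = 1$, and letting $\alpha_1$ run through an infinite admissible subset (e.g.\ sufficiently large multiples of $N$) produces infinitely many such perfect squares. The main obstacle is thus the $x = 3$ carry handling, which forces the side condition of choosing the $\alpha_i$ with a common factor to guarantee that carries never collide with existing monomials.
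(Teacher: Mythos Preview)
Your proposal is correct and follows essentially the same construction as the paper: write $k=\binom{p+1}{2}-\beta$ with $\beta\in\{0,\dots,p-2\}$, set $\alpha_i=2\alpha_{i-1}$ for $i=2,\dots,\beta+1$ and $\alpha_i>2\alpha_{i-1}$ thereafter, and argue as in Lemma~\ref{2not}. Your multiple-of-$N$ device for the $x=3$ carry is more elaborate than needed: the paper's simple condition $\alpha_1\ge 3$ already suffices, since each carry $3^{2\alpha_{i-1}+1}$ then satisfies $2\alpha_{i-1}<2\alpha_{i-1}+1<\alpha_i+\alpha_1$ and hence lands in an empty position, exactly as in the binary argument.
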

\begin{proof}
	Write $k$ as $k=\binom{p+1}{2}-\beta$, with $\beta \in \{0,\dots,p-2 \}$. Define a sequence $(\alpha_1,\dots,\alpha_{p-1})$ of positive integers (depending on $\alpha_1$) satisfying the following conditions:
	$$\begin{cases} \alpha_1 \ge 3, \\ \alpha_i=2\alpha_{i-1} \text{  for  } i=2,\dots,\beta+1 \\ \alpha_i >  2\alpha_{i-1} \text{  for  } i > \beta+2\\  \end{cases}.$$
	Then it is straightforward (arguing as in Lemma \ref{2not}) to prove that the integer $y=(1+x^{\alpha_1}+\dots+x^{\alpha_{p-1}})$ is such that $y^2$ has exactly $\binom{p+1}{2}-\beta=k$ non-zero digits in its base $x$ representation.
\end{proof}

As in the previous Section, the remaining case  $k=\binom{p}{2}+1$ is not covered by the previous construction, but requires some slight adjustements to be made, according to the value of $x$; here, we will need to split this case in three subcases.
\begin{lemma}
	Let $k \ge 7$ be an integer of the form $\binom{p}{2}+1$, for some positive integer $p$. Then there are infinitely many perfect squares not divisible by $3$ having exactly $k$ non-zero digits in their base $3$ representation.
\end{lemma}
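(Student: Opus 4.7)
My plan is to prescribe, for each $k = \binom{p}{2}+1$ with $p \ge 4$, an explicit infinite family of integers $y \equiv 1 \pmod{3}$ (so that $y^2$ is automatically coprime with $3$) such that $y^2$ has exactly $k$ nonzero base-$3$ digits. The natural expansion of $y^2$ with $y = 1 + \sum_{i=1}^{p-1} 3^{\alpha_i}$ produces $\binom{p+1}{2}$ pre-carry terms, so one needs to engineer exactly $p-1$ reductions. Since the recipe $\alpha_i = 2\alpha_{i-1}$ of the previous lemma delivers only $p-2$ reductions, I would insert one additional reduction at the bottom of the expansion through a short carry cascade, in the same spirit as the base case $(1+2^{\alpha_1}+2^{\alpha_1+1}+2^{\alpha_1+2})^2$ of the binary analogue.

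Concretely, for any integer $\alpha_1 \ge 3$, I would set
$$\alpha_2 = \alpha_1+1, \quad \alpha_3 = 2\alpha_1, \quad \alpha_i = 2\alpha_{i-1} \ \text{ for } \ i = 4, \dots, p-1,$$
and put $y = 1 + \sum_{i=1}^{p-1} 3^{\alpha_i}$. The base case $p=4$ is handled by direct expansion: in $(1+3^{\alpha_1}+3^{\alpha_1+1}+3^{2\alpha_1})^2$, the sole initial coincidence occurs at exponent $2\alpha_1$, where the square of $3^{\alpha_1}$ (coefficient $1$) collides with the cross product $2 \cdot 3^{\alpha_3}$ (coefficient $2$) to produce coefficient $3$; this carries to exponent $2\alpha_1+1$, adding to the existing coefficient $2$ (from $2 \cdot 3^{\alpha_1+\alpha_2}$) and producing another coefficient $3$, which in turn carries to exponent $2\alpha_1+2$, where the existing coefficient $1$ (from the square of $3^{\alpha_2}$) absorbs it into the final coefficient $2$. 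The cascade collapses three pre-carry positions into a single nonzero digit at $2\alpha_1+2$, so $y^2$ exhibits exactly $7$ nonzero digits, supported on $\{0, \alpha_1, \alpha_1+1, 2\alpha_1+2, 3\alpha_1, 3\alpha_1+1, 4\alpha_1\}$.

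For $p \ge 5$ I would proceed inductively, setting $Y_j := 1+\sum_{i=1}^j 3^{\alpha_i}$ and showing that in the step $Y_{j-1}^2 \to Y_j^2$ with $j \ge 4$ the digit count grows by exactly $j$. The key point is that the maximum exponent of $Y_{j-1}^2$ is $\alpha_j = 2\alpha_{j-1}$, carrying coefficient $1$ (from $(3^{\alpha_{j-1}})^2$); the new cross product $2 \cdot 3^{\alpha_j}$ collides with it to give coefficient $3$, triggering a single carry to exponent $\alpha_j+1$. Under $\alpha_1 \ge 3$ the exponent $\alpha_j+1$ lies strictly between the previous maximum $\alpha_j$ and the smallest new cross exponent $\alpha_j+\alpha_1$, hence is empty, and no further cascade occurs. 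The remaining new positions $\alpha_j+\alpha_\ell$ ($1 \le \ell \le j-1$) and $2\alpha_j$ are pairwise distinct and disjoint from the support of $Y_{j-1}^2$ thanks to the geometric growth $\alpha_i = 2^{i-2}\alpha_1$ for $i \ge 3$. Thus $f(j) := \#\{\text{nonzero digits of }Y_j^2\}$ satisfies $f(j) = f(j-1) + j$ for $j \ge 4$, which together with $f(3) = 7$ yields $f(p-1) = 7 + \sum_{j=4}^{p-1} j = \binom{p}{2}+1 = k$.

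The main technical burden is a routine verification of noncoincidences — that $\alpha_j+1$, the new cross exponents $\alpha_j+\alpha_\ell$, and the new square exponent $2\alpha_j$ are all pairwise distinct and disjoint from the support of $Y_{j-1}^2$ — all of which reduce to straightforward inequalities under the hypothesis $\alpha_1 \ge 3$. Varying $\alpha_1$ over all integers $\ge 3$ then yields infinitely many distinct such $y$, and hence infinitely many perfect squares $y^2$, not divisible by $3$, with exactly $k$ nonzero base-$3$ digits.
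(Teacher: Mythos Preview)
Your argument is correct. The base case $p=4$ checks out exactly as you describe: the pre-carry coefficients at exponents $2\alpha_1,\,2\alpha_1+1,\,2\alpha_1+2$ are $3,\,2,\,1$ respectively, and the two-step carry collapses them into a single digit $2$ at $2\alpha_1+2$, yielding seven nonzero digits provided $\alpha_1\ge 3$ (so that $2\alpha_1+2<3\alpha_1$). The inductive step is also sound: the top digit of $Y_{j-1}^2$ at $2\alpha_{j-1}=\alpha_j$ is indeed $1$ (it comes solely from $(3^{\alpha_{j-1}})^2$ and is untouched by lower carries), the incoming cross term contributes $2$, the resulting single carry lands at the vacant position $\alpha_j+1<\alpha_j+\alpha_1$, and the remaining new exponents $\alpha_j+\alpha_\ell$ and $2\alpha_j$ all sit strictly above the old support. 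The recursion $f(j)=f(j-1)+j$ with $f(3)=7$ gives $f(p-1)=\binom{p}{2}+1$ as claimed.

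Your construction differs from the paper's in its organization. The paper treats $k=7$ and $k=11$ by ad hoc families $\bigl(1+3^{\alpha_1}+3^{\alpha_1+1}+3^{\alpha_1+2}\bigr)^2$ and $\bigl(1+3^{\alpha_1}+3^{\alpha_1+1}+3^{2\alpha_1}+3^{2\alpha_1+1}\bigr)^2$, and for $k>11$ fixes the initial segment $\alpha_2=\alpha_1+1$, $\alpha_3=2\alpha_1$, $\alpha_4=2\alpha_1+1$ (producing four reductions at once, verified via the $k=11$ case) before switching to $\alpha_i=2\alpha_{i-1}$ for $i\ge 5$. You instead use a single uniform recipe for every $p\ge 4$, with the shorter initial block $\alpha_2=\alpha_1+1$, $\alpha_3=2\alpha_1$ already providing the needed extra cascade, and the doubling rule taking over from $i=4$. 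Your version is tidier in that it avoids the separate treatment of $p=4,5$ and keeps the induction going from the smallest case; the paper's version has the minor advantage that its general family is visibly built by appending the doubling tail to the explicit $k=11$ example, so the ``four equalities'' claim is read off directly rather than through a cascade computation.
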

\begin{proof}
	First, we consider some special cases:
	\begin{itemize}
		\item The perfect square $(1+3^{\alpha_1}+3^{\alpha_1+1}+3^{\alpha_1+2})^2$ has exactly $7$ non-zero digits in its base $3$ representation.
		
		\item The expansion $(1+3^{\alpha_1}+3^{\alpha_1+1}+3^{2\alpha_1}+3^{2\alpha_1+1})^2$ yields perfect squares having exactly  $11=\binom{5}{2}+1$ non-zero digits in their base $3$ representation.
	\end{itemize}
	For $k > 11$, consider a sequence of positive integers $(\alpha_1,\dots,\alpha_{p-1})$ such that
	$$\begin{cases} \alpha_1 \ge 4, \\ \alpha_2=\alpha_1+1,  \\ \alpha_i =  2\alpha_1+i-3 \text{  for  } i =3,4,\\
	\alpha_i=2\alpha_{i-1} \text{  for  } i \ge 5.  \end{cases}.$$
	Then, by taking the integer $y=1+3^{\alpha_1}+3^{\alpha_2}+\dots+3^{\alpha_p}$, notice that, for the expansion (\ref{sqbinx}) of $y^2$, the following hold:
	\begin{itemize}
		\item There are exactly four equalities between terms of (\ref{sqbinx}) depending on our choice of $\alpha_1,\alpha_2,\alpha_3,\alpha_4$, which follow from the expansion of $(1+3^{\alpha_1}+3^{\alpha_2}+3^{\alpha_3}+3^{\alpha_4})^2$ (which has exactly $11$ non-zero digits in its base $3$ representation).
		\item There are $p-5$ equalities, one for each $\alpha_i$, with $i=5,6,\dots,p-1$, following from the condition $\alpha_i=2\alpha_{i-1}$.
	\end{itemize}
	As before, these equalities are such that the terms obtained are distinct from any other term in (\ref{sqbinx}) and that each term of the expansion yields a digit in the base $3$ representation of $y^2$, which then contains exactly $\binom{p+1}{2}-4-(p-5)=\binom{p}{2}+1=k$ non-zero digits. 
\end{proof}
\begin{lemma}
	Let $k \ge 4$ be an integer of the form $\binom{p}{2}+1$, for a positive integer $p$.
	\begin{enumerate}
		\item There are infinitely many perfect squares not divisible by $4$ having exactly $k$ non-zero digits in their base $4$ representation.
		\item There are infinitely many perfect squares not divisible by $5$ having exactly $k$ non-zero digits in their base $5$ representation.
	\end{enumerate}
\end{lemma}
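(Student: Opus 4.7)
The plan is to split the argument into the base case $k = 4$ (which corresponds to $p = 3$), handled via explicit families, and the general case $k = \binom{p}{2}+1$ with $p \ge 4$, obtained by adapting the recursion used in the preceding lemmas so as to produce the missing $(p-1)$-th coincidence.

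For $k = 4$ I would verify directly that the family $y = 1 + 2 \cdot 4^m + 4^{m+1}$, $m \ge 2$, yields
$y^2 = 1 + 3 \cdot 4^{m+1} + 4^{2m+1} + 2 \cdot 4^{2m+2}$, an odd square with exactly four non-zero digits in base $4$; and that the family $y = 2(1 + 5^m)$, $m \ge 2$, yields $y^2 = 4 + 3 \cdot 5^m + 5^{m+1} + 4 \cdot 5^{2m}$ (using $8 = 13_{5}$), a square with exactly four non-zero digits in base $5$ not divisible by $5$.

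For $k = \binom{p}{2}+1$ with $p \ge 4$ I would set $y = 1 + x^{\alpha_1} + x^{2\alpha_1} + x^{3\alpha_1} + \sum_{i=4}^{p-1} x^{\alpha_i}$, with $\alpha_1 \ge 2$ and $\alpha_i = 2\alpha_{i-1}$ for $i \ge 4$, and track the coincidences in the expansion (\ref{sqbinx}) of $y^2$. The critical choice $\alpha_3 = 3\alpha_1 = \alpha_1 + \alpha_2$ produces three coincidences among the first four exponents, at positions $2\alpha_1$ (pair $\{x^{2\alpha_1}, 2x^{\alpha_2}\}$), $3\alpha_1$ (pair $\{2x^{\alpha_1+\alpha_2}, 2x^{\alpha_3}\}$) and $4\alpha_1$ (pair $\{x^{2\alpha_2}, 2x^{\alpha_1+\alpha_3}\}$); for every $i = 4, \ldots, p - 1$, the pair $\{x^{2\alpha_{i-1}}, 2x^{\alpha_i}\}$ at position $\alpha_i$ gives one further coincidence. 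The total is $3 + (p - 4) = p - 1$ coincidences, collapsing the $\binom{p+1}{2}$ terms of the expansion to exactly $\binom{p}{2}+1 = k$ non-zero positions.

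In base $5$ every coincidence produces a coefficient in $\{3, 4\}$, which is a valid digit, so the digit count is immediate; the constant term $1$ gives coprimality with $5$. In base $4$ the coefficient $4$ arising at position $3\alpha_1$ carries to $3\alpha_1 + 1$, and the assumption $\alpha_1 \ge 2$ suffices to ensure that this shifted position coincides with none of the other populated positions (all of which are integer multiples of $\alpha_1$) and triggers no secondary carry, while $y$ is odd so $\gcd(y^2, 4) = 1$. The main obstacle is the bookkeeping that confirms no unintended coincidences occur among the sums $\alpha_i + \alpha_j$ beyond the $p - 1$ listed above, which I would deduce from the explicit description $\alpha_i = 3 \cdot 2^{i-3} \alpha_1$ for $i \ge 3$ together with the uniqueness of binary representations.
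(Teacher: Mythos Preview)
Your argument is correct, but it follows a genuinely different route from the paper's. The paper does not engineer any coincidences among the exponents at all: instead it changes the \emph{leading} digits of $y$. In base~$4$ it takes $y = 3\cdot 4^{\alpha_{p-2}} + 2\sum_{i=0}^{p-3} 4^{\alpha_i}$ with the $\alpha_i$ widely spaced ($\alpha_i > 2\alpha_{i-1}$), so that the top term $9\cdot 4^{2\alpha_{p-2}} = 2\cdot 4^{2\alpha_{p-2}+1} + 4^{2\alpha_{p-2}}$ contributes two digits, the cross block contributes $p-2$, and the remaining square contributes $\binom{p-1}{2}$, giving $\binom{p}{2}+1$ in total; in base~$5$ it uses leading digits $2,2$ and exploits $8 = (13)_5$ analogously. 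Your construction keeps all digits of $y$ equal to $1$ and instead forces exactly $p-1$ coincidences in the sumset $\{\alpha_i+\alpha_j\}$ via the near-geometric choice $\alpha_2=2\alpha_1$, $\alpha_3=3\alpha_1$, $\alpha_i=2\alpha_{i-1}$ for $i\ge 4$. The trade-off is that the paper's method needs a separate digit trick for each base but no sumset bookkeeping, whereas your method is uniform across both bases (and would in fact work for any $x\ge 5$ without modification) at the cost of the coincidence count, which you correctly justify by the ``largest index dominates'' observation that $\alpha_k+\alpha_l\le 2\alpha_{j-1}=\alpha_j$ whenever $l<j$ and $j\ge 4$. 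Your carry analysis in base~$4$ is also sound: the unique coefficient~$4$ sits at position $3\alpha_1$ and moves to $3\alpha_1+1$, which for $\alpha_1\ge 2$ is not a multiple of $\alpha_1$ and hence misses every other occupied position.
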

\begin{proof}
	\begin{enumerate}
		\item Fix $\alpha_1 \ge 2$, and define a sequence $(\alpha_1,\dots,\alpha_{p-2})$ of positive integers such that $\alpha_i > 2\alpha_{i-1}$ for every $i=2,\dots,p-2$. Take now the integer $\displaystyle y=3\cdot 4^{\alpha_{p-2}}+2\left(\sum_{i=0}^{p-3} 4^{\alpha_i}\right),$ with $\alpha_0=0$ (remember that $p \ge 3$). Then clearly $$y^2=9 \cdot 4^{2\alpha_{p-2}}+3\left(\sum_{i=0}^{p-3}4^{\alpha_{p-2}+\alpha_i+1}\right)+4\left(\sum_{i=0}^{p-3}4^{\alpha_i}\right)^2.$$
		
		Now, examining the base $4$ representation associated to the right-hand side, the first term yields exactly two non-zero digits, the second one has $p-2$ non-zero digits, while the last bracket gives exactly $\binom{p-1}{2}$ non-zero digits (by expanding the square and remembering the conditions on $\alpha_i$); further, our conditions are such that all terms appearing on the right-hand side are pairwise distinct. Thus the base $4$ representation of $y^2$ has exactly $\binom{p-1}{2}+(p-2)+2=\binom{p}{2}+1=k$ non-zero digits.
		\item Similarly, for $\alpha_1 \ge 2$,  define a sequence $(\alpha_1,\dots,\alpha_{p-2})$ of positive integers such that $\alpha_i > 2\alpha_{i-1}$ for any $i=2,\dots,p-2$, and take  $\displaystyle y=2\cdot 5^{\alpha_{p-2}}+2 \cdot 5^{\alpha_{p-3}}+\left(\sum_{i=0}^{p-4} 5^{\alpha_i}\right),$ with $\alpha_0=0$. Then $$y^2=4 \cdot 5^{2\alpha_{p-2}}+8 \cdot 5^{\alpha_{p-2}+\alpha_{p-3}}+4 \cdot 5^{2\alpha_{p-3}}+$$ $$+\left( \sum_{i=0}^{p-4}5^{\alpha_i}\right)^2+4\left( \sum_{i=0}^{p-4} 5^{\alpha_{p-2}+\alpha_{i}}\right)+4\left(\sum_{i=0}^{p-4}5^{\alpha_{p-3}+\alpha_i}\right).$$
		
		This time, examining the base $5$ representation associated to this expansion, we easily see that the first and third term yield one non-zero digit, the second one gives $2$ digits, the fourth has exactly $\binom{p-2}{2}$ non-zero digits, whence the last two have $p-3$ non-zero digits each; since all terms appearing on the right-hand side have distinct exponents, the base $5$ representation of $y^2$ has thus exactly $2(p-3)+\binom{p-2}{2}+4=\binom{p}{2}+1=k$ non-zero digits.
		
	\end{enumerate}
	
\end{proof}

\begin{lemma}
	Let $x \ge 6$ and $k \ge 4$ be integers, with $k$ having the form $\binom{p}{2}+1$, for some positive integer $p$. Then there are infinitely many perfect squares not divisible by $x$ having exactly $k$ non-zero digits in their base $x$ representation.
\end{lemma}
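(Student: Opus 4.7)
The plan is to continue the strategy used throughout this section: for each admissible $p$ I will exhibit a sequence of exponents $(\alpha_{1},\dots,\alpha_{p-1})$ such that the expansion (\ref{sqbinx}) of $(1+x^{\alpha_{1}}+\dots+x^{\alpha_{p-1}})^{2}$ produces exactly $p-1$ coincidences, thereby collapsing the $\binom{p+1}{2}$ monomials to precisely $\binom{p}{2}+1=k$ non-zero digits in base $x$. Working with $x\ge 6$ is a substantial convenience: every combined coefficient I will produce is at most $4$, which stays within the digit range $\{1,\dots,x-1\}$, so the carry-over difficulties that forced multiple sub-cases in the proofs for $x\in\{3,4,5\}$ do not appear here. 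The argument splits naturally by the value of $p$: a uniform construction handles $p\ge 5$, while $p=3$ and $p=4$ require small separate families.

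For $p=4$ (so $k=7$) I would simply take $y=1+x^{a}+x^{2a}+x^{3a}$ with $a\ge 1$. A direct computation gives
$$y^{2}=1+2x^{a}+3x^{2a}+4x^{3a}+3x^{4a}+2x^{5a}+x^{6a},$$
the three coincidences being $2\alpha_{1}=\alpha_{0}+\alpha_{2}$, $2\alpha_{2}=\alpha_{1}+\alpha_{3}$ and $\alpha_{0}+\alpha_{3}=\alpha_{1}+\alpha_{2}$. Since every coefficient is at most $4<x$, this is already the base-$x$ representation and exhibits $7=k$ non-zero digits.

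For $p\ge 5$ I would extend the same pattern by setting
$$\alpha_{0}=0,\quad\alpha_{1}=a,\quad\alpha_{2}=2a,\quad\alpha_{3}=3a,\quad\alpha_{4}=6a,\quad\alpha_{i}=2\alpha_{i-1}\ \text{for}\ i\ge 5,$$
with $a\ge 1$. The three small-index coincidences above persist, and each of the doubling relations $\alpha_{i+1}=2\alpha_{i}$ for $i=3,\dots,p-2$ contributes one additional coincidence, namely $2\alpha_{i}=\alpha_{0}+\alpha_{i+1}$. Since $\alpha_{j}=3a\cdot 2^{j-3}$ for $j\ge 3$, the exponents grow fast enough that a routine check shows these are the only equalities in (\ref{sqbinx}); this gives $3+(p-4)=p-1$ coincidences, and the largest combined coefficient is the $2+2=4$ occurring at position $3a$. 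Since $x\ge 6>4$, the expansion is already the base-$x$ representation of $y^{2}$ and contains exactly $\binom{p}{2}+1$ non-zero digits. Because the constant term of $y$ is $1$, $y^{2}$ is coprime to $x$, and letting $a$ range over the positive integers yields infinitely many solutions.

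The case $p=3$ is the main obstacle: with only three exponents the mechanism above produces at most one coincidence, whereas two are required. I would instead employ families of the form $y=c+x^{a}$ with $c$ chosen so that $c^{2}$ spans exactly two non-zero base-$x$ digits and $2c$ remains a single digit. For $x=6$ the explicit family $(3+6^{a})^{2}=3+6+6^{a+1}+6^{2a}$ gives four non-zero digits for every $a\ge 2$; for $x\ge 7$ one can take $c=\lceil\sqrt{x}\rceil$, incrementing by $1$ in the special situation $x\mid c^{2}$ (in particular when $x$ is a perfect square). The finest book-keeping in the proof lies in verifying that such a $c$ always exists in the range $\sqrt{x}\le c<x/2$ for every $x\ge 6$ and that the four positions produced in the base-$x$ expansion---$\{0,1,a,2a\}$ in the non-carrying cases and $\{0,1,a+1,2a\}$ for $x=6$---remain distinct under the hypothesis $a\ge 2$.
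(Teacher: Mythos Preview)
Your proposal is correct, but it follows a genuinely different route from the paper's proof.

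The paper gives a single uniform construction valid for every $p\ge 3$: set $\sigma=\lceil\sqrt{x+1}\,\rceil$, so that (for $x\ge 6$) one has $2\sigma\le x$ and $x<\sigma^{2}<2x$; then take
\[
y=\sigma x^{\alpha_{p-2}}+x^{\alpha_{p-3}}+\dots+x^{\alpha_{1}}+1
\]
with super-increasing exponents $\alpha_{i}>2\alpha_{i-1}$. In $y^{2}$ there are \emph{no} coincidences at all: the term $\sigma^{2}x^{2\alpha_{p-2}}$ contributes two digits, the $p-2$ cross terms $2\sigma x^{\alpha_{p-2}+\alpha_{i}}$ contribute one digit each, and the square of the tail contributes $\binom{p-1}{2}$ digits, giving $\binom{p-1}{2}+(p-2)+2=\binom{p}{2}+1$ directly.

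Your approach instead works with a $p$-term $y$ and \emph{engineers} exactly $p-1$ coincidences among the $\binom{p+1}{2}$ monomials of (\ref{sqbinx}), via an arithmetic progression $0,a,2a,3a$ followed by doublings. This forces a case split ($p=3$, $p=4$, $p\ge5$), and for $p\ge5$ the ``routine check'' that no unintended coincidences occur, while true (a short argument modulo $3$ on the exponents $\alpha_{j}/a$ does it), is not entirely immediate. Your $p=3$ case is essentially the paper's idea specialised to two terms, except that the paper's choice $\sigma=\lceil\sqrt{x+1}\,\rceil$ neatly avoids the perfect-square adjustment you handle by incrementation, and also covers $x=6$ without a separate family.

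In short: the paper's argument is shorter and uniform; yours is more intricate but has the mild bonus that for $p\ge4$ the base number $y$ has all digits equal to $1$.
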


\begin{proof}
	
	Let $\sigma=\left \lceil \sqrt{x+1} \right \rceil$. Since $x \ge 6$, clearly $2 \sigma \le x$ and $x < \sigma^2 < 2x$; now, for $\alpha_1 \ge 2$, define a sequence $(\alpha_1,\ldots,\alpha_{p-2})$ of positive integers such that $\alpha_i > 2\alpha_{i-1}$ for all $i=2,\ldots,p-2$, and take $y=\sigma x^{\alpha_{p-2}}+x^{\alpha_{p-3}}+\ldots+x^{\alpha_1}+1$. Clearly, fixing $\alpha_0=0$, we have 
	$$y^2=\sigma^2x^{2\alpha_{p-2}}+\left( \sum_{i=0}^{p-3} 2\sigma x^{\alpha_{p-2}+\alpha_i} \right)+ \left( \sum_{i=0}^{p-3} x^{\alpha_i} \right)^2.$$
	Our choice of $\sigma$ is such that the first term of the right-hand side has exactly $2$ non-zero digits in its base $x$ representation, while the second one has exactly $p-2$ non-zero digits, and the third one has exactly $\binom{p-1}{2}$; since all powers of $x$ appearing in this expansion have distinct exponents, we immediately deduce that the base $x$ representation of $y^2$ has exactly $\binom{p-1}{2}+p=\binom{p}{2}+1=k$ non-zero digits.	
\end{proof}

We can combine all the results of this section to achieve the desired result:
\begin{theorem}
	Let $x \ge 2$ and $k \ge 3$ be integers with $(x,k) \not \in \{ (2,4), (3,4)\}$. Then there exist infinitely many perfect squares not divisible by $x$ having exactly $k$ non-zero digits in their base $x$ representation.
\end{theorem}

The previous result affirms that the known finiteness results of Mihailescu (for $k=2$) and Corvaja-Zannier (if $k=4$ and $x=2$) are the only exceptions to the general rule. However, our construction does not work in the case $x=3, k=4$; in fact, in that case it is easy to see that it is impossible to impose more than one equality among the exponents of $$(1+3^{\alpha_1}+3^{\alpha_2})^2=1+2\cdot 3^{\alpha_1}+3^{2\alpha_1}+(2\cdot 3^{\alpha_2}+2\cdot 3^{\alpha_2+\alpha_1})+3^{2\alpha_2},$$
and that in the general expansion

\begin{equation*}\label{sqbin3}
\begin{gathered}	
(3^{\alpha_0}+3^{\alpha_1}+\dots+3^{\alpha_{p-1}})^2=3^{2\alpha_0}+(2\cdot3^{\alpha_0+\alpha_1})+3^{2\alpha_1}+\left(2\cdot 3^{\alpha_2+\alpha_1} + 2\cdot 3^{\alpha_2+\alpha_0}\right)+3^{2\alpha_2} \\ +\dots+3^{2\alpha_{p-3}}+\left( \sum_{i=0}^{p-3} 2\cdot 3^{\alpha_{p-2}+\alpha_i} \right)+3^{2\alpha_{p-2}}+\left(\sum_{i=0}^{p-2}2\cdot 3^{\alpha_{p-1}+\alpha_i}\right)+3^{2\alpha_{p-1}}
\end{gathered}
\end{equation*}
at least the four terms $1=3^{2\alpha_0},2\cdot3^{\alpha_1},2\cdot3^{\alpha_{p-1}+\alpha_{p-2}},3^{2\alpha_{p-1}}$ have different exponents from the others, and thus are very hard to \emph{remove} from the final base $3$ representation that will derive from this expansion; also, from a short computation, the only perfect squares $y^2$ having exactly four non-zero digits in their base $3$ representation, for $y \le 10^7$ not divisible by $3$, are obtained for $y \in \{7, 14, 16, 17, 26, 35, 47, 68, 350, 3788\}$.

Therefore, while we were not able to reach a conclusion in this case, we think it might be interesting to ask this Question, with which we finish this work.

\begin{question}
	Determine if there are infinitely many squares not divisible by $3$ having exactly $4$ non-zero digits in their base $3$ representation.
\end{question}

\section*{Acknowledgements}
This work is part of my PhD thesis. I would like to thank my advisers, Professors Roberto Dvornicich and Umberto Zannier for their supervision, and for helpful discussions. I would also like to thank the referee for his helpful remarks and suggestions.

\end{document}